\DeclareMathOperator*{\GF}{{GF}}
\def\vB{\mathbb{B}}
\def\vF{\mathbb{F}}
\def\vS{\mathbb{S}}
\def\vN{\mathbb{N}}
\def\vD{\mathbb{D}}
\def\vZ{\mathbb{Z}}
\def\vR{\mathbb{R}}
\def\vQ{\mathbb{Q}}
\def\OO{\mathcal{O}}
\def\cO{\mathcal{O}}
\def\vE{\mathbb{E}}
\def\p{\mathfrak{p}}
\newtheorem{teo}{Theorem}
\newtheorem{lemma}[teo]{Lemma}
\newtheorem{cor}[teo]{Corollary}
\newtheorem{prop}[teo]{Proposition}
\theoremstyle{definition}
\newtheorem{example}[teo]{Example}
\newtheorem{rem}[teo]{Remark}
\newtheorem*{claim*}{Claim}
\author[A. Ferraguti]{Andrea Ferraguti}
\address{Institute of Mathematics\\
University of Zurich\\
Winterthurerstrasse 190\\
8057 Zurich, Switzerland\\
}
\email{andrea.ferraguti@math.uzh.ch}
\author[G. Micheli]{Giacomo Micheli }
\address{Institute of Mathematics\\
University of Zurich\\
Winterthurerstrasse 190\\
8057 Zurich, Switzerland\\
}
\email{giacomo.micheli@math.uzh.ch}
\thanks{The second author was supported in part by  Swiss National Science Foundation grant number 149716 and \emph{Armasuisse}.}
\date{}
\title{On Mertens-Ces\`aro Theorem for number fields}
\begin{document}

\begin{abstract}
Let $K$ be a number field with ring of integers $\mathcal O$. After introducing a suitable notion of density for subsets of $\mathcal O$, generalizing that of natural density for subsets of $\vZ$, we show that  the density of the set of coprime $m$-tuples of algebraic integers is ${1/\zeta_K(m)}$, where $\zeta_K$ is the Dedekind zeta function of $K$. This generalizes a result found independently by Mertens (1874) and Ces\`aro (1883) concerning the density of coprime pairs in $\vZ$.
\end{abstract}
\maketitle

\smallskip
\noindent \textbf{Keywords:} Number fields; Algebraic integers; Natural density; Mertens-Ces\`aro Theorem; Zeta function.

\smallskip
\noindent \textbf{2010 Mathematical Subject Classification :} 11R04, 11R45.

\section{Introduction}
In 1874 Mertens  proved that the natural density of the set of coprime pairs of rational integers is $1/\zeta(2)$, where $\zeta$ is the Riemann zeta function \cite{mertens}.
In 1881 Ces\`aro independently asked the same question in \cite{CESAROQ} and provided the solution two years later in \cite{CESARO}, getting the same result as Mertens.
Another proof of this result is presented in the book by Hardy and Wright \cite[Theorem 330]{HW}, while a generalization to the case of $m$-tuples of integers has been more recently given in \cite{Nymann}.

If one tries to extend the formulation of the theorem to the case of algebraic integers, one encounters some obstructions from the very beginning. In the next paragraphs the reader can find some of the motivations that led to our approach to the problem, especially concerning the definition of the density for a subset of the ring of algebraic integers $\OO$ of a number field $K$.

Indeed, for the case of $\vZ$, there exists  a ``canonical'' way to compute the density of a set
 $A\subseteq\vZ$:
 this can be in fact defined as the limit in $B$ (if it
  exists) of the sequence $|A\cap [-B,B[|/(2B)$. This
   definition extends to the density of a set $A\subseteq \vZ^m$ by considering the limit of the sequence $|A\cap [-B,B[^m|/(2B)^m$ as $B$ goes to infinity.
This definition characterizes the probability that, given the $m$-dimensional hypercube of large side $B$ centred in the origin, a uniformly random selected integer point has all relatively prime entries.

What can actually be done in the setting of algebraic integers is to consider the analogous problem for the set of
$m$-tuples of ideals of $\OO$ using a suitable definition of density involving the norm function. Very interesting results in this direction can be found in \cite{BS}. On the other hand, if we want a proper
 generalization of Mertens-Ces\`aro Theorem to $\OO$ (and not to the set of
  ideals of $\OO$) the approach presented in \cite{BS} does not apply:
   indeed, given a large bound $B$, there might be infinitely many
    elements of norm at most $B$ (contrary to what happens in
     the case of $\vZ$). Therefore, not only this definition of density
      for sets of ideals of $\OO$ cannot extend to a definition of
       density for $\OO$, but also the analogous probability interpretation that one has over $\vZ$ is missing.
%
%

A \emph{non canonical} definition for the density of a subset $A\subseteq\OO$ is obtained by considering a $\vZ$-isomorphism $\alpha:\OO\rightarrow \vZ^n$ ($n$ being  the degree of the extension $K\supseteq \vQ$) and then by computing the density of $\alpha(A)\subseteq\vZ^n$ as previously described. The resulting density is then dependent on the choice of $\alpha$ (that is equivalent to a choice of a $\vZ$-basis for $\OO$), but  extends to $A\subseteq \OO^m$ componentwise, as one would expect by considering the limit of the sequence $|\alpha(A)\cap[-B,B[^{mn}|/(2B)^{mn}$.
Using this definition of density for the set $E\subseteq \OO^m$ of coprime $m$-tuples and a similar strategy to the one presented in \cite{Maze20} for the case of unimodular matrices over $\vZ$, the following turns out to be true:
\begin{itemize}
\item the density $d$ of $E$ can be computed;
\item $d$ is \emph{independent} on the choice of the embedding $\alpha$ (i.e. independent of the choice of the $\vZ$-basis for $\OO$);
\item $d$ equals $1/\zeta_K(m)$, where $\zeta_K(m)$ is the Dedekind zeta function of the number field $K$.
\end{itemize}
This completely generalizes Mertens-Ces\`aro Theorem to the case of number fields. It is very interesting to note that this result matches the one presented in \cite[Theorem 4.1]{BS}, that was obtained in the context of ideals of $\OO$.
\subsection*{Outline of the proof}
Let us now briefly describe the strategy we use to compute the above mentioned density in the general case of a subset $E\subseteq \vZ^M$ (In our case $M=nm$).
First, we find a family $\{E_t\}_{t\in\vN}$ of subsets of $\vZ^M$  with the following properties:
\begin{itemize}
\item we are able to compute the density of $E_t$ for every $t$ (Lemma \ref{fondlemma});
\item $E_{t+1}\subseteq E_t$;
\item $\bigcap_{t\in \vN} E_t=E$.
\end{itemize}
Then we verify that the family of sets $\{E_t\}_{t\in \vN}$ approximates the set $E$ \emph{in density} in the sense that the sequence of densities of $E_t\setminus E$ converges to zero as $t$ tends to infinity.
Under these assumptions we are able to prove that $\lim_{t\rightarrow \infty} \vD(E_t)=\vD(E)$ (Theorem \ref{MAIN}).

\subsection{Notation}
We say that the ideals $I_1,\dots,I_l$ are coprime if $\sum_j I_j=R$; we say that the elements $a_1,\ldots, a_s\in R$ are coprime if the ideals $(a_1),\ldots,(a_s)$ are coprime.
Let $K$ be a number field of degree $n$ and $\OO$ its ring of algebraic integers. Let  $\vE=\{\mathbf{e}_i\}_{i=1}^n$ be a $\vZ$-basis for $\OO$.
Define
\[\OO[B,\vE]=\left\{\sum^{n}_{i=1} a_i \mathbf{e}_i\;|\; a_i\in [-B,B[\cap \vZ\right\}.\]
Later on in the paper we will just write $\OO[B]$ since the basis will be understood.
For $p$ a prime number, we denote by $S_p=\{\p_1^{(p)},\dots, \p_{\lambda_p}^{(p)}\}$ the set of distinct prime ideals lying over $p$ (in particular we have that $\prod^{\lambda_p}_{j=1}\p_{j}^{(p)}$ is the radical of the ideal generated by $p$). Let $d_j^{(p)}$ be the inertia degree of $\p_{j}^{(p)}$ (i.e. $\dim_{\vF_p}(\OO/\p_{j}^{(p)})$) and denote by $D_p$ the integer $\sum^{\lambda_p}_{j=1} d_j^{(p)}$. Let $d$ be a positive integer, let us denote by $\GF(p,d)$ the finite field of order $p^d$. Define
\[R_p:=\prod^{\lambda_p}_{j=1} \OO/\p_j^{(p)}\cong \prod^{\lambda_p}_{j=1} \GF(p,d_j^{(p)}).\]
For $z=(z_1,\dots,z_m)$ an element of $\OO^m$, we denote by $I_z$ the ideal generated by the set $\{z_1,\dots,z_m\}$.
If $\vF$ is a field we denote by $\vF^*$ its multiplicative group.
\section{A definition of the density for  $\OO^m$}\label{sec:density}

Let $\vE$ be a $\vZ$-basis for $\OO$. Our goal is to define a notion of density (which will in general depend on the choice of $\vE$) for a subset $T$ of $\OO^m$.
We define the \emph{upper density of $T$ with respect to $\vE$} to be
\[\overline\vD_\vE(T)=\limsup_{B\rightarrow\infty} \frac{|\OO[B,\vE]^m\cap T|}{(2B)^{mn}}\]
and the \emph{lower density of $T$ with respect to $\vE$} as
\[\underline\vD_\vE(T)=\liminf_{B\rightarrow\infty} \frac{|\OO[B,\vE]^m\cap T|}{(2B)^{mn}}.\]
We say that $T$ has \emph{density $d$ with respect to $\vE$} if
\[\overline\vD_\vE(T)=\underline\vD_\vE(T)=:\vD_\vE(T)=d.\]
Whenever this density is independent of the chosen basis $\vE$, it is consistent to denote the density of a set $T$ by $\vD(T)$ without any subscript.
\begin{rem}
First observe that $d\in[0,1]\subseteq \vR$ by construction.
The main idea behind this definition of density is the same that one has over $\vZ$: the only difference is that the way in which we cover the entire set (in this case $\OO$) is not canonical but depends on the basis $\vE$.
\end{rem}
\begin{example}
Let us show with an example that choosing different bases for $\OO$ could yield different densities for the same subset $T\subseteq \OO$. Let $K=\vQ(i)$, so that $\OO=\vZ[i]$. Let $T=\{x+iy\in \OO\colon x,y>0\}$. If $\vE=\{1,i\}$, clearly $|\OO[B,\vE]\cap T|=(B-1)^2$, which gives $\vD_\vE(T)=1/4$. On the other hand, choosing as a basis $\vE'=\{1,-1+i\}=\{\mathbf e_1,\mathbf e_2\}$ we have that $T=\{x\mathbf e_1+y\mathbf e_2\in \OO\colon x,y>0,\,\, x>y\}$. Therefore $|\OO[B,\vE']\cap T|=(B-1)(B-2)/2$, which shows that $\vD_{\vE'}(T)=1/8$.
\end{example}
Let $E\subseteq\OO^m$ be the set of coprime $m$-tuples, i.e. the elements $z\in \OO^m$ for which $I_z=\OO$.
A corollary of our final result (Theorem \ref{MAIN}) is that the density of $E$ is actually independent of the basis $\vE$: even if the choice of the covering of $\OO^m$ is not canonical (it depends in fact on the chosen $\vZ$-basis for $\OO$) the  density of $E$ \emph{is}.

\section{Proof of the main result}\label{sec:proof}
Let $\vS$ be a finite set of prime numbers.
Let $E_\vS$ be the set of $m$-tuples $z=(z_1,\dots,z_m)$ in $\OO^m$ such that
the ideal $I_z$ is coprime with every $p\in \vS$.
\begin{rem}
Equivalently, one checks that
\[E_\vS=\{z\in \OO^m\;|\; I_z +\p_j^{(p)}=\OO\quad \forall p\in \vS\quad\text{and}\quad \forall j\in\{1,\dots,\lambda_p\}\}\]
by observing that $(p)\subseteq \prod_j \p_j^{(p)}$ and the $\p_j^{(p)}$ are maximal.
\end{rem}
Let $\psi_p: (\OO/(p))^{m}\rightarrow R_p^{m}=(\prod^{\lambda_p}_{j=1} \OO/\p_j^{(p)})^{m}$ be the morphism induced by the projection
$\OO/(p)\twoheadrightarrow \prod^{\lambda_p}_{j=1} \OO/\p_j^{(p)}$.
Recall that $D_p=\sum^{\lambda_p}_{j=1} d^{(p)}_{j}$.
In the following lemma and in Proposition \ref{teocard} we will consider the surjection
\[\pi: \OO^{m}\longrightarrow \left(\prod_{p\in \vS}R_p\right)^{m}\eqqcolon T\]
induced by the quotient maps $\OO\rightarrow \OO/\p_j^{(p)}$.
It is easy to prove the following
\begin{lemma}\label{usefullemma}
We have
\[{E_\vS}=\pi^ {-1}\left(\prod_{p\in \vS}\prod^{\lambda_p}_{j=1} \left(\left(\OO/\p_j^{(p)}\right)^{m}\setminus \{0\}\right)\right).\]
\end{lemma}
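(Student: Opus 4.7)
The plan is a straightforward unpacking of definitions combined with the equivalent description of $E_\vS$ already given in the remark preceding the lemma. First, by that remark, $z=(z_1,\dots,z_m)\in E_\vS$ if and only if $I_z+\p_j^{(p)}=\OO$ for every $p\in \vS$ and every $j\in\{1,\dots,\lambda_p\}$.

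Next, I would exploit the maximality of each prime $\p_j^{(p)}$: the equality $I_z+\p_j^{(p)}=\OO$ is equivalent to $I_z\not\subseteq \p_j^{(p)}$, which in turn holds if and only if at least one coordinate $z_i$ has nonzero image in $\OO/\p_j^{(p)}$; equivalently, the image of $z$ under the componentwise reduction $\OO^m\to (\OO/\p_j^{(p)})^m$ is not the zero tuple.

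Finally, the canonical rearrangement of finite products gives the identification
\[
\left(\prod_{p\in \vS}\prod_{j=1}^{\lambda_p}\OO/\p_j^{(p)}\right)^{m}\;\cong\;\prod_{p\in \vS}\prod_{j=1}^{\lambda_p}\left(\OO/\p_j^{(p)}\right)^{m},
\]
and under this identification $\pi(z)$ becomes the tuple whose $(p,j)$-coordinate is precisely the image of $z$ in $(\OO/\p_j^{(p)})^m$. The condition from the previous paragraph is therefore equivalent to saying that $\pi(z)$ belongs to the set $\prod_{p\in\vS}\prod_{j=1}^{\lambda_p}\!\bigl((\OO/\p_j^{(p)})^m\setminus\{0\}\bigr)$, which gives the claimed equality. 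There is no real obstacle here: the only point requiring a small amount of care is keeping track of the iterated product structure and verifying that the above reshuffling is compatible with the projections that define $\pi$, which it is by construction.
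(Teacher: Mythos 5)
Your proof is correct, and since the paper simply asserts ``It is easy to prove the following'' without supplying details, the unpacking you give (using the preceding remark, then maximality of each $\p_j^{(p)}$ to convert $I_z+\p_j^{(p)}=\OO$ into the condition that $z$ reduces to a nonzero tuple in $(\OO/\p_j^{(p)})^m$, and finally reindexing the product) is exactly the argument the authors intend. Nothing to add.
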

\begin{prop}\label{teocard}
Let $q$ be a positive integer, $\vE$ a $\vZ$-basis for $\OO$, $\vS$ a finite set of prime numbers and $N=\prod_{p\in \vS}p$. Then
\[|E_\vS\cap \OO[qN]^{m}|=(2q)^{mn}\prod_{p\in \vS}\left(p^{nm-  mD_p} \prod^{\lambda_p}_{j=1}(p^{d_j^{(p)}m} -1)\right)\]
where $\OO[qN]^{m}$ is the set of $m$-tuples of elements of $\OO[qN]$.
\end{prop}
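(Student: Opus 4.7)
The plan is to count $|E_\vS\cap \OO[qN]^m|$ by analysing the restriction of $\pi$ to $\OO[qN]^m$ and applying Lemma \ref{usefullemma}. Since each prime in $\vS$ divides $N$, the surjection $\pi\colon \OO^m\to T$ factors through reduction modulo $N$, giving
\[
\pi\colon \OO^m\xrightarrow{\;\phi^m\;}(\OO/(N))^m\xrightarrow{\;\bar\pi\;}T.
\]
I will prove that the composition, restricted to $\OO[qN]^m$, is surjective with fibres all of the same cardinality, compute that common fibre size, and finally multiply by $|V|$, where $V$ is the target set appearing in Lemma \ref{usefullemma}.

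First, I examine $\phi\colon \OO[qN]\to \OO/(N)$. Writing $x=\sum a_i\mathbf{e}_i$ with $a_i\in[-qN,qN)\cap\vZ$, and using that $\vE$ is a $\vZ$-basis of $\OO$ (hence a $\vZ/N\vZ$-basis of $\OO/N\OO\cong (\vZ/N\vZ)^n$), the map $\phi$ reduces each coordinate modulo $N$. Because the integer interval $[-qN,qN)$ contains exactly $2q$ representatives of each residue class mod $N$, the map $\phi$ is surjective with every fibre of size $(2q)^n$. Therefore $\phi^m$ is surjective with uniform fibres of size $(2q)^{mn}$.

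Next I analyse $\bar\pi$. As the $p\in\vS$ are distinct rational primes, the Chinese Remainder Theorem yields $\OO/(N)\cong \prod_{p\in \vS}\OO/(p)$. For each $p$, the natural map $\OO/(p)\twoheadrightarrow R_p=\prod_{j=1}^{\lambda_p}\OO/\p_j^{(p)}$ is surjective (again by CRT, since the $\p_j^{(p)}$ are pairwise coprime maximal ideals containing $(p)$), and a cardinality count gives $|\OO/(p)|=p^n$, $|R_p|=p^{D_p}$, so the kernel has size $p^{n-D_p}$. Combining over $p\in\vS$ and then over the $m$ coordinates shows $\bar\pi$ is surjective with fibres of uniform size $\prod_{p\in \vS} p^{m(n-D_p)}$. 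Composing with $\phi^m$, the restriction of $\pi$ to $\OO[qN]^m$ is a surjection onto $T$ whose fibres all have size
\[
(2q)^{mn}\prod_{p\in \vS} p^{m(n-D_p)}.
\]

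Finally, by Lemma \ref{usefullemma} we have $E_\vS=\pi^{-1}(V)$ with
\[
V=\prod_{p\in \vS}\prod_{j=1}^{\lambda_p}\bigl((\OO/\p_j^{(p)})^m\setminus\{0\}\bigr),\qquad |V|=\prod_{p\in\vS}\prod_{j=1}^{\lambda_p}\bigl(p^{d_j^{(p)}m}-1\bigr).
\]
Multiplying the fibre size by $|V|$ yields exactly the claimed formula. The only genuinely delicate point in the argument is the uniformity statement for the fibres of $\phi^m$; once one notices that $\OO[qN]$ is a union of $N^n$ translates of a fundamental block covering $\OO/(N)$ evenly (a direct consequence of working with a $\vZ$-basis and an interval whose length is a multiple of $N$), everything else is bookkeeping via CRT and the inertia-degree formula $\sum_j d_j^{(p)}=D_p$.
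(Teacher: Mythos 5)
Your proof is correct and takes essentially the same route as the paper: you factor $\pi$ through $(\OO/(N))^m$, establish that the reduction map has uniform fibres of size $(2q)^{mn}$ on $\OO[qN]^m$ (the paper does this with explicit basis coordinates, you do it by counting residues in $[-qN,qN)$, which is the same observation), show the CRT-induced map to $T$ has uniform fibres of size $\prod_{p\in\vS}p^{m(n-D_p)}$, and multiply by $|V|$ via Lemma \ref{usefullemma}. The only cosmetic difference is that you compute a single composite fibre size before multiplying by $|V|$, whereas the paper first forms $H=\psi^{-1}(V)$ and then multiplies $|H|$ by $(2q)^{mn}$.
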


\begin{proof}
The key point is to decompose the map $\pi$.
For the rest of the proof, the reader may refer to the following diagram:
\[
\begin{CD}
\OO^{m} @>\pi_N >>(\OO/(N))^{m}@>\overline\psi >> T \\
@.                  @|              @|\\
@.                  (\prod_{p\in \vS}\OO/(p) )^{m} @>{\psi}>> (\prod_{p\in \vS} R_p)^{m}
\end{CD}
\]
where $\pi_N$ is the quotient map, $\psi=(\dots,\psi_p,\dots )$ and $\overline \psi$ is its obvious extension to
$(\OO/(N))^{m}$ obtained by applying the Chinese Remainder Theorem to primes in $\vS$. Notice then that $\pi= \overline{\psi}\circ \pi_N$.
Our strategy to prove the result is to compute the cardinality of the fibers of $\psi$ and the intersection
of the fibers of $\pi_N$ with $\OO[qN]$:
\begin{itemize}
\item Observe that $\psi_p: (\OO/(p))^{m}\rightarrow R_p^{m}$ is a surjective morphism of $\vF_p$-vector spaces, therefore $|\psi_p^{-1}(y_p)|=|\ker(\psi_p)|=p^{nm-  mD_p}$ for all $y_p\in R_p^{m}$. It follows that $|\overline \psi^{-1}(y)|=\prod_{p\in \vS}|\psi_p^{-1}(y_p)|=\prod_{p\in \vS}p^{nm-  mD_p}$ for all $y\in (\OO/(N))^{m}$.
\item Let $\overline z=(\overline z_j)_j\in (\OO/(N))^{m}$ and $z=(z_j)_j\in \OO^{m}$.
Write \[\overline z_j=\left(\sum^{n}_{t=0}r_t^{j}\pi(\mathbf{e}_t)\right)\]
for some unique $0\leq r_t^{j}<N$ in $\vZ$. Observe that existence and uniqueness of the $r_t^j$ follow from the fact that $\OO/(N)$ is a free $\vZ/N\vZ$-module of rank $n$ with basis $\{\pi(\mathbf{e}_t)\}$.
It follows that $\pi_N(z)=\overline z$ if and only if
\[z_j=\sum^{n}_{t=0}(r_t^{j}+ l^{j}_t N)\mathbf{e}_t\]
for some $l^{j}_t\in \vZ$.
We conclude then that \[|\OO[qN]^{m}\cap \pi_N^{-1}(z)|= (2q)^{mn}\]  since the $r_t^{j}$ are fixed by the condition $\pi_N(z)=\overline z$ and $l^{j}_t\in [-q,q[\cap \vZ$ for each index $j,t$.
\end{itemize}

Let us now complete the proof.
By Lemma \ref{usefullemma} we have that
\begin{equation}\label{eqlemm}
{E_\vS}\cap \OO[qN]^m=\pi^{-1}\left(\prod_{p\in \vS}\prod^{\lambda_p}_{j=1} \left(\left(\OO/\p_j^{(p)}\right)^m\setminus \{0\}\right)\right)\cap \OO[qN]^{m}
\end{equation}

In order to simplify the notation, define
\[H:=\psi^{-1}\left(\prod_{p\in \vS}\prod^{\lambda_p}_{j=1} \left(\left(\OO/\p_j^{(p)}\right)^m\setminus \{0\}\right)\right)\]
so that $E_\vS=\pi_N^{-1}(H)$ by Lemma \ref{usefullemma}.
Since $\pi=\psi\circ \pi_N$, Equation (\ref{eqlemm}) reads
\[{E_\vS}\cap \OO[qN]^m=\pi_N^{-1}\left(H\right)\cap \OO[qN]^{m}.\]
Therefore
\[|\pi_N^{-1}(H)\cap \OO[qN]^{m}|=(2q)^{mn} |H|\] and
\[|H|=\prod_{p\in \vS}\left(p^{nm- D_p m} \prod^{\lambda_p}_{j=1}\left|\left(\OO/\p_j^{(p)}\right)^m\setminus \{0\}\right|\right).\]
Thus,
\[|{E_\vS}\cap \OO[B]|=(2q)^{mn}\prod_{p\in \vS}\left(p^{nm-  mD_p} \prod^{\lambda_p}_{j=1} (p^{d_j^{(p)}m} -1)\right).\]

\end{proof}

Before we proceed, let us recall the following elementary calculus fact

\begin{lemma}\label{calclemma}
Let $\{a_B\}_{B\in\vN}$ be a sequence of real numbers and $N$ a positive integer. Then
\[\lim_{B\rightarrow\infty}a_B=c\:\Leftrightarrow \: \lim_{q\rightarrow\infty}a_{r+qN}=c \quad \forall r\in\{0,\dots,N-1\}.\]
\end{lemma}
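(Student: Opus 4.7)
The statement is the standard fact that a real sequence converges if and only if each of the $N$ residue subsequences $\{a_{r+qN}\}_{q\in\vN}$ converges to the same limit. I will prove both implications directly from the definition of the limit.

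For the forward implication ($\Rightarrow$), the subsequences $\{a_{r+qN}\}_q$ are all subsequences of $\{a_B\}_B$ obtained via strictly increasing index maps $q\mapsto r+qN$, so convergence of $\{a_B\}_B$ to $c$ immediately forces each such subsequence to converge to $c$ as well. This is a one-line argument from the definition.

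For the reverse implication ($\Leftarrow$), fix $\varepsilon>0$. By hypothesis, for each $r\in\{0,\dots,N-1\}$ there exists $Q_r\in\vN$ such that $|a_{r+qN}-c|<\varepsilon$ whenever $q\ge Q_r$. Set
\[
B_0:=\max_{0\le r\le N-1}(r+Q_r N).
\]
For any integer $B\ge B_0$, perform Euclidean division to write $B=r+qN$ uniquely with $r\in\{0,\dots,N-1\}$ and $q\in\vN$. The inequality $B\ge B_0\ge r+Q_rN$ gives $q\ge Q_r$, so $|a_B-c|=|a_{r+qN}-c|<\varepsilon$. Since $\varepsilon$ was arbitrary, $\lim_{B\to\infty}a_B=c$.

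There is no real obstacle here: the only mildly delicate point is that the threshold $B_0$ must be chosen as the maximum over the finitely many residue classes (so that every large $B$, regardless of its residue mod $N$, lands in the tail of the corresponding subsequence); finiteness of $\{0,\dots,N-1\}$ is what makes this maximum well-defined and the argument work.
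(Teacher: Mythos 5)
Your proof is correct. The paper states Lemma \ref{calclemma} without proof, simply calling it an ``elementary calculus fact,'' and your argument is precisely the standard one: the forward direction is the subsequence principle, and the reverse direction combines the $N$ tail bounds into a single threshold $B_0=\max_r(r+Q_rN)$, with Euclidean division ensuring every $B\ge B_0$ falls into the tail of its residue-class subsequence. Nothing to compare against, and nothing missing.
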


\begin{lemma}\label{fondlemma}
In the notation previously described we have
\[\vD(E_\vS)=\vD_\vE(E_\vS)= \prod_{p\in \vS}\prod^{\lambda_p}_{j=1} \left(1-\frac{1}{p^{d_j^{(p)}m}}\right).\]
\end{lemma}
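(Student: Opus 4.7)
The plan is to evaluate the limit defining $\vD_\vE(E_\vS)$ first along the arithmetic progression $B=qN$, where Proposition~\ref{teocard} gives an exact formula, and then extend to arbitrary $B$ via Lemma~\ref{calclemma}. A bonus feature is that the count supplied by Proposition~\ref{teocard} does not involve the basis $\vE$, so the basis-independence of the density (and hence the legitimacy of writing $\vD(E_\vS)=\vD_\vE(E_\vS)$) will come for free once the limit is computed for one choice of $\vE$.

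For the subsequence $B=qN$, I would divide the formula of Proposition~\ref{teocard} by $(2qN)^{mn}=(2q)^{mn}\prod_{p\in\vS}p^{mn}$; the factor $(2q)^{mn}$ cancels, producing a ratio independent of $q$:
\[\frac{|E_\vS\cap\OO[qN]^m|}{(2qN)^{mn}}=\prod_{p\in\vS}\Bigl(p^{-mD_p}\prod_{j=1}^{\lambda_p}(p^{d_j^{(p)}m}-1)\Bigr).\]
Since $D_p=\sum_{j}d_j^{(p)}$, one can absorb $p^{-mD_p}=\prod_{j}p^{-md_j^{(p)}}$ into the inner product, turning each local factor $p^{d_j^{(p)}m}-1$ into $1-p^{-d_j^{(p)}m}$, which is precisely the claimed expression.

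To deal with an arbitrary $B$, I would write $B=r+qN$ with $0\le r<N$ and exploit the trivial inclusions
\[\OO[qN]^m\subseteq\OO[r+qN]^m\subseteq\OO[(q+1)N]^m.\]
Applying Proposition~\ref{teocard} to both endpoints sandwiches the normalised cardinality between $C_\vS\cdot q^{mn}/(r+qN)^{mn}$ and $C_\vS\cdot(q+1)^{mn}/(r+qN)^{mn}$, where $C_\vS$ is the constant appearing in Proposition~\ref{teocard}. Both bounds tend to $C_\vS/N^{mn}$ as $q\to\infty$, so for each residue $r\in\{0,\dots,N-1\}$ the subsequence indexed by $r+qN$ converges to the same value computed in the previous step. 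Lemma~\ref{calclemma} then upgrades this to convergence of the full sequence in $B$, yielding $\vD_\vE(E_\vS)$ and, by basis-independence of the right-hand side, $\vD(E_\vS)$.

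I do not expect any serious obstacle: the computation along $B=qN$ is a direct algebraic simplification of Proposition~\ref{teocard}, and the sandwich step is essentially a one-line calculus argument packaged by Lemma~\ref{calclemma}. The only point worth a moment of care is verifying the sandwich inclusions, which follow immediately from $qN\le r+qN<(q+1)N$ and the definition of $\OO[\,\cdot\,,\vE]$.
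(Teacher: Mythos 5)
Your proposal is correct and follows essentially the same route as the paper: compute $a_{qN}$ exactly from Proposition~\ref{teocard}, simplify to the stated product, sandwich $a_{r+qN}$ between the counts at $qN$ and $(q+1)N$ normalized by $(2(r+qN))^{mn}$, and invoke Lemma~\ref{calclemma}. The only (harmless) cosmetic difference is that you fold the basis-independence observation into the proof, whereas the paper records it as a separate remark afterward.
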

\begin{proof}
Let \[a_B:=\frac{|\OO[B]^{m}\cap E_\vS|}{(2B)^{mn}}.\]
Recall that $N=\prod_{p\in \vS}p$. Let \[D:=\prod_{p\in \vS}\prod^{\lambda_p}_{j=1} \left(1-\frac{1}{p^{d_j^{(p)}m}}\right).\] We first show that $a_{qN}=D$. By Proposition \ref{teocard} we have that
\[a_{qN}=\frac{|\OO[qN]^{m}\cap E_\vS|}{(2qN)^{mn}}=\]
\[=\frac{(2q)^{mn}\prod_{p\in \vS}p^{nm-  mD_p} \prod^{\lambda_p}_{j=1} (p^{d_j^{(p)}m} -1)}{(2qN)^{mn}}.\]
By cancelling common factors in numerator and denominator and writing $D_p$ according to its definition we get
\[a_{qN}=\prod_{p\in \vS}p^{-m\sum^{\lambda_p}_{j=1} d_j^{(p)}} \prod^{\lambda_p}_{j=1} (p^{d_j^{(p)}m} -1)\]
and bringing $p^{-m\sum^{\lambda_p}_{j=1} d_j^{(p)}}$ inside the products it follows that
\[a_{qN}=\prod_{p\in \vS} \prod^{\lambda_p}_{j=1}\left(1 -\frac{1}{p^{d_j^{(p)}m}}\right).\]
We are now ready to prove that
\[\lim_{B\rightarrow\infty} a_{B}=D.\]
Thanks to lemma \ref{calclemma} it will be enough to show that
\[\lim_{q\rightarrow\infty}a_{r+qN}=D\]
for all $r\in \{0,\dots,N-1\}$.
Indeed
\[a_{qN} \cdot \left(\frac{(2qN)}{2r+2qN}\right)^{mn}<a_{r+qN}<a_{(q+1)N} \cdot \left(\frac{(2(q+1)N)}{2r+2qN}\right)^{mn}.\]
By passing to the limit in $q$ the claim follows.
\end{proof}
\begin{rem}
It is immediate to observe that the density of $E_\vS$ is independent of the chosen basis $\vE$.
\end{rem}

We are now in a position to formulate and prove the main result.
\begin{teo}\label{MAIN}
Let $m$ be a positive integer and $K$ be a number field. Let $\OO$ be the ring of integers of $K$.
The density of the set $E$ of coprime $m$-tuples of $\OO$ is
\[\vD(E)=\frac{1}{\zeta_K(m)}\]
where $\zeta_K$ is the Dedekind zeta function of the number field $K$.
\end{teo}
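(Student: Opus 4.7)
The plan is to apply the framework of Lemma \ref{fondlemma} to the nested family $E_t:=E_{\vS_t}$, where $\vS_t$ is the set of rational primes not exceeding $t$. By construction $E_{t+1}\subseteq E_t$ and $\bigcap_{t\in\vN} E_t = E$, while Lemma \ref{fondlemma} gives the explicit value
\[
\vD(E_t)\;=\;\prod_{p\le t}\prod_{j=1}^{\lambda_p}\!\left(1-\frac{1}{p^{d_j^{(p)}m}}\right),
\]
which is in particular independent of the chosen basis $\vE$. As $t\to\infty$ this partial product converges, by the Euler product for the Dedekind zeta function, to $\prod_{p,j}(1-p^{-d_j^{(p)}m})=1/\zeta_K(m)$ when $m\ge 2$; for $m=1$ it converges to $0$ via divergence of $\sum_p 1/p$, matching $1/\zeta_K(1)=0$ from the simple pole of $\zeta_K$.

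Since $E\subseteq E_t$, one immediately obtains $\overline{\vD}_{\vE}(E)\le\vD(E_t)\to 1/\zeta_K(m)$. For the matching lower bound, the disjoint decomposition $E_t=E\sqcup(E_t\setminus E)$ together with the convergence of $|E_t\cap\OO[B]^m|/(2B)^{mn}$ yields the identity
\[
\underline{\vD}_{\vE}(E)\;=\;\vD(E_t)-\overline{\vD}_{\vE}(E_t\setminus E),
\]
so the whole theorem reduces to proving $\lim_{t\to\infty}\overline{\vD}_{\vE}(E_t\setminus E)=0$; this single statement then gives existence of $\vD(E)$, its independence from $\vE$, and its value $1/\zeta_K(m)$. (For $m=1$ the reduction is unnecessary, since $\vD(E_t)\to 0$ already forces $\vD(E)=0$.)

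The main obstacle is therefore this tail estimate. My approach starts from the inclusion $E_t\setminus E\subseteq\bigcup_{p>t}\bigcup_{j=1}^{\lambda_p}(\p_j^{(p)})^m$, together with the finiteness observation that for any nonzero coordinate $z_i$ of a nonzero tuple $z\in\OO[B]^m\cap(\p_j^{(p)})^m$ one has $p^{d_j^{(p)}}\mid|N_{K/\vQ}(z_i)|\le C_{\vE}B^n$, with $C_\vE$ depending only on the basis. Hence at each fixed $B$ only finitely many prime ideals contribute and finite subadditivity of $\overline{\vD}_{\vE}$ is available. Combining this with the coset-counting estimate
\[
|\p_j^{(p)}\cap\OO[B]|\;\le\;\frac{(2B+p^{d_j^{(p)}})^n}{p^{d_j^{(p)}}},
\]
obtained from $p^{d_j^{(p)}}\OO\subseteq\p_j^{(p)}$ in the style of the proof of Proposition \ref{teocard}, and splitting the sum according to whether $p^{d_j^{(p)}}\le 2B$ or not, one expects to obtain a bound of order $\sum_{p>t,\,j}p^{-d_j^{(p)}m}$, which vanishes by absolute convergence of $\zeta_K(m)$ for $m\ge 2$. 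The technically delicate part is the range $p^{d_j^{(p)}}>2B$: there the coset bound above is too coarse, so the norm constraint must be exploited more carefully to avoid a lossy union bound, and this is where I expect the proof to require the most work.
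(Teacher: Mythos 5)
You follow the same overall strategy as the paper: pass to the nested family $E_t=E_{\vS_t}$, invoke Lemma \ref{fondlemma} for $\vD(E_t)$, reduce both inequalities to showing $\lim_{t\to\infty}\overline{\vD}_\vE(E_t\setminus E)=0$, cover $E_t\setminus E$ by $\bigcup_{\p\succ p_t}\p^m$, and truncate this union by observing that a nonzero $z\in\OO[B]$ has $|N(z)|\le C_\vE B^n$, so only finitely many $\p$ contribute. The $m=1$ case is dispatched exactly as in Remark \ref{emme1}, and your identity $\underline{\vD}_\vE(E)=\vD(E_t)-\overline{\vD}_\vE(E_t\setminus E)$ is a fine sharpening of the paper's inequality \eqref{equazionedensita}.

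The gap, which you flag but do not close, is in the count $|\p\cap\OO[B]|$. Your coset bound $|\p\cap\OO[B]|\le(2B+N(\p))^n/N(\p)$, obtained by tiling $\p$ with translates of $N(\p)\OO$, is correct but catastrophically lossy once $N(\p)>2B$: there it degenerates to $|\p\cap\OO[B]|\lesssim N(\p)^{n-1}$, whereas the true count is $\approx (2B)^n/N(\p)$ plus an error controlled by the shortest vector of $\p$. Concretely, already for $n=m=2$ and degree-one primes $p$ with $2B<p<CB^2$, your bound gives $|\p\cap\OO[B]|^m/(2B)^{mn}\lesssim p^2/B^4$, and summing over the $\gg B^2/\log B$ such primes yields a quantity that \emph{grows} like $B^2/\log B$, so the argument collapses. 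The defect is geometric: the sublattice $N(\p)\OO\subseteq\p$ grossly misrepresents the shape of $\p$, which generically contains vectors of length $\sim N(\p)^{1/n}$, far shorter than $N(\p)$.

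The paper closes exactly this gap with Proposition \ref{corollario}, proved from the Masser--Vaaler lattice-point theorem (Theorem \ref{latticepoints}). The crucial extra input is the first-minimum bound $\lambda_1(\tau(\p))\ge N(\p)^{1/n}$ from \cite[Lemma 5]{masval}, transported to the coordinate lattice $\alpha_\vE(\p)$ by a fixed linear comparison map; this makes the error term
\[
\left|\,|\p^m\cap\OO[B]^m|-\frac{(2B)^{mn}}{N(\p)^m}\right|\;\ll\;\left(\frac{2B}{c_1N(\p)^{1/n}}+1\right)^{mn-1},
\]
and after dividing by $(2B)^{mn}$ and summing over $p<CB^n$ the error contributes $O\bigl(B^{-1}\sum_{p<CB^n}p^{-(m-1/n)}\bigr)=O(\log\log B/B)\to0$, while the main term $\sum_{p>p_t}n\,p^{-m}\to0$ as $t\to\infty$. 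Any completion of your elementary route would need an input of comparable strength to $\lambda_1\ge N(\p)^{1/n}$; the tiling by $N(\p)\OO$ cannot supply it, so as written the proposal does not establish the theorem.
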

\begin{rem}\label{emme1}
Let $p_1,\dots,p_t$ be the first $t$ rational primes. We define $\vS_t=\{p_1,\dots,p_t\}$ . 
The reader should observe that one has the inclusion $E\subseteq E_{\vS_t}$ and therefore
\[0\leq \vD_\vE(E)\leq \overline\vD_{\vE}(E)\leq \vD(E_{\vS_t}).\]
As a consequence one has that in the case $m=1$ Theorem \ref{MAIN} follows by passing to the limit $t\rightarrow \infty$ in the above inequality and recalling that the Dedekind zeta function of $K$ has a pole at $1$.  As expected in fact, the group of units of the ring of integers has density zero in any basis. Observe that this is the special case $k=1$ of \cite[Corollary 4.2]{FrancCell}. A more extensive description of additive representations of elements in the unit group can be found in \cite{FabriU}.
\end{rem}
\begin{rem}
Notice that the argument of \ref{emme1} does not lead to the conclusion in the case $m>1$, since it provides just an upper bound (uniform in $\vE$) for $\overline\vD_\vE(E)$.
\end{rem}
Before starting the proof let us recall the following theorem, which we will use as a fundamental tool.
\begin{teo}\label{latticepoints}
 Let $S\subseteq \vR^M$ be a bounded set whose boundary $\partial S$ can be covered by the images of at most $W$ maps $\phi\colon [0,1]^{M-1}\to \vR^M$ satisfying Lipschitz conditions
	$$|\phi(x)-\phi(y)|\leq L|x-y|$$
	for the Euclidean norm. Then $S$ is measurable. Let $V=\text{vol}(S)$.\\
	Let $\Lambda\subseteq \vR^M$ be a full-rank lattice and
	$$\lambda_1\coloneqq \inf\{|v|\colon v\in \Lambda\setminus\{0\}\}$$
	be its first successive minimum. Then
	$$\left||\Lambda\cap S|-\frac{V}{\det\Lambda}\right|\leq c W\left(\frac{L}{\lambda_1}+1\right)^{M-1}$$
	for a constant $c$ depending only on $M$.
\end{teo}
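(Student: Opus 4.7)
The plan is to split the statement into a measurability assertion and a lattice-point counting estimate, each handled via the Lipschitz hypothesis. First, I would establish that $S$ is Jordan measurable. Since each $\phi\colon[0,1]^{M-1}\to\vR^M$ is Lipschitz with constant $L$, the image $\phi([0,1]^{M-1})$ has $(M-1)$-dimensional Hausdorff measure bounded by a constant depending only on $M$ times $L^{M-1}$. The union of $W$ such images therefore has finite Hausdorff $(M-1)$-measure, and in particular Lebesgue $M$-measure zero. This covers $\partial S$, giving Jordan measurability of $S$ and the existence of $V=\mathrm{vol}(S)$.

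Next I would reduce the counting estimate to a count of ``boundary'' lattice points via a standard tiling argument. Fix a fundamental parallelepiped $P$ of $\Lambda$ containing $0$, with $\det\Lambda=\mathrm{vol}(P)$, so that the translates $\{v+P\}_{v\in\Lambda}$ partition $\vR^M$ disjointly. Writing
\[
V=\sum_{v\in\Lambda}\mathrm{vol}\big((v+P)\cap S\big),\qquad |\Lambda\cap S|\det\Lambda=\sum_{v\in\Lambda}\mathbf{1}[v\in S]\,\mathrm{vol}(P),
\]
the summand at $v$ vanishes whenever $v+P\subseteq S$ (which forces $v\in S$) or $(v+P)\cap S=\varnothing$ (which forces $v\notin S$), and otherwise contributes at most $\mathrm{vol}(P)$ in absolute value. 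Hence
\[
\left||\Lambda\cap S|-\frac{V}{\det\Lambda}\right|\leq B,\qquad B\coloneqq\#\{v\in\Lambda:(v+P)\cap\partial S\neq\varnothing\}.
\]

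To bound $B$, I would exploit the Lipschitz cover of $\partial S$ to localize. For each $\phi$, partition $[0,1]^{M-1}$ into $k^{M-1}$ sub-cubes of side $1/k$ with $k=\lceil L\sqrt{M-1}/\lambda_1\rceil$, so that by the Lipschitz condition each sub-image has diameter at most $\lambda_1$. The key remaining task is to bound by a constant $c_M$, depending only on $M$, the number of lattice points $v$ for which $v+P$ meets a given sub-image. This is accomplished by taking $P$ to be a parallelepiped built from a Minkowski-reduced basis and invoking the packing property that open balls of radius $\lambda_1/2$ around distinct lattice points are pairwise disjoint. Summing over the $k^{M-1}$ sub-cubes of one patch and then over the $W$ patches yields $B\leq cW(L/\lambda_1+1)^{M-1}$, as desired.

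The main obstacle lies in the last step: for anisotropic lattices the diameter of any fundamental parallelepiped can be vastly larger than $\lambda_1$, so the naive inclusion ``$v+P$ meets a set of diameter $\lambda_1$ implies $v$ lies in a $\mathrm{diam}(P)$-neighborhood of it'' is far too lossy. The remedy, which I would follow, is to work with a Minkowski-reduced basis and organize the counting along the directions of the successive minima $\lambda_1\leq\dots\leq\lambda_M$, or equivalently to project a tube around each sub-image onto a hyperplane transverse to the dominant direction of the patch and apply the packing bound in the projected lattice. This is essentially the Lipschitz-parametrization counting lemma of Masser--Vaaler, with its refinement by Widmer.
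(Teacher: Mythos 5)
Your first two stages are sound, and they follow the standard route to this result: each Lipschitz image $\phi([0,1]^{M-1})$ is Lebesgue-null in $\vR^M$, so $\partial S$ is null and $S$ is measurable; and the tiling argument correctly reduces the error to $B=\#\{v\in\Lambda:(v+P)\cap\partial S\neq\emptyset\}$, because a translate $v+P$ meeting both $S$ and its complement is connected and hence meets $\partial S$. The subdivision of each patch into $k^{M-1}$ subcubes with $k=\lceil\sqrt{M-1}\,L/\lambda_1\rceil$ then yields the factor $(L/\lambda_1+1)^{M-1}$ up to a constant depending only on $M$. Note, for comparison, that the paper does not prove this theorem at all: its proof is the single citation \cite[Lemma 2]{masval}, of which the statement is a transcription.

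The genuine gap is exactly the step you flag and then defer: the claim that at most $c_M$ translates $v+P$ of a (suitably reduced) fundamental cell can meet a set of diameter $\lambda_1$ is never established, and your closing appeal to ``the Lipschitz-parametrization counting lemma of Masser--Vaaler, with its refinement by Widmer'' is circular here, since that lemma \emph{is} the statement under proof. The missing step is true and not deep, but it requires the reduction theory you only gesture at. Concretely: take a Minkowski-reduced basis $b_1,\dots,b_M$ of $\Lambda$ and let $P$ be its cell; the bounded orthogonality defect gives $\prod_i|b_i|\leq c_M\det\Lambda$, while each $|b_i|\geq\lambda_1$. For the dual basis vectors one has $|b_i^\vee|=\mathrm{vol}_{M-1}(\{b_j\}_{j\neq i})/\det\Lambda\leq\prod_{j\neq i}|b_j|/\det\Lambda\leq c_M/|b_i|\leq c_M/\lambda_1$, so a set of diameter $\lambda_1$ has extent at most $c_M$ in each basis coordinate, and therefore meets at most $(c_M+2)^M$ translates of $P$. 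With that lemma written out (or with an honest citation, as the paper gives), your argument closes; without it, the decisive bound $B\leq cW(L/\lambda_1+1)^{M-1}$ is unsupported, precisely because, as you yourself observe, an arbitrary fundamental cell can be far more elongated than $\lambda_1$.
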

\begin{proof}
 See \cite[Lemma 2]{masval}.
\end{proof}
Next we are going to deduce from Theorem \ref{latticepoints} the particular case that we will use in the proof of Theorem \ref{MAIN}.\\
\begin{prop}\label{corollario}
Let $K$ be a number field of degree $n$ with ring of integers $\OO$. Let $I$ be an ideal of $\OO$. Then
\[\left||(I\cap \OO[B])^m|-\frac{(2B)^{nm}}{N(I)^m}\right|\leq c\left(\frac{2B}{c_1N(I)^{1/n}}+1\right)^{mn-1}\]
for every $B\in \vN$, where $N(I)$ denotes the norm of $I$ and the constants $c,c_1$ are independent of $B$ and of $I$.
\end{prop}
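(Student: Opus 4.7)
The strategy is a direct application of Theorem \ref{latticepoints}. Fix the basis $\vE$ of $\OO$ and identify $\OO^m$ with $\vZ^{nm}\subseteq \vR^{nm}$ via $\vE^m$. Under this identification, $\OO[B]^m$ corresponds to $[-B,B)^{nm}\cap \vZ^{nm}$, so $|(I\cap \OO[B])^m|=|I^m\cap S|$ where $S=[-B,B)^{nm}$ and $\Lambda=I^m$ is viewed as a sublattice of $\vZ^{nm}$. The covolume is $\det\Lambda=[\OO:I]^m=N(I)^m$ and $\mathrm{vol}(S)=(2B)^{nm}$. The boundary of $S$ decomposes into $W=2nm$ flat faces, each parametrized by a map $\phi\colon[0,1]^{nm-1}\to\vR^{nm}$ obtained by rescaling the unit cube by $2B$ and translating; such a map is Lipschitz with constant $L=2B$. (A minor technicality: $S$ is half-open rather than closed, but the discrepancy between $|I^m\cap S|$ and $|I^m\cap \overline{S}|$ is bounded by a boundary term of the same order as the error we are going to get, so it is harmless.)

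The only non-routine step is a lower bound on the first successive minimum $\lambda_1=\lambda_1(I^m)$ that is uniform in $I$: I aim to show $\lambda_1\geq c_1\,N(I)^{1/n}$ for a constant $c_1>0$ depending only on $K$ and $\vE$. Since any nonzero $(v_1,\ldots,v_m)\in I^m$ has some nonzero component $v_j\in I$ with $|(v_1,\ldots,v_m)|_{\vR^{nm}}\geq |v_j|_{\vE}$, it suffices to prove the same bound for $\lambda_1(I)\subseteq\vR^n$. For nonzero $v\in I$ one has $(v)\subseteq I$, hence $|N_{K/\vQ}(v)|\geq N(I)$. Writing $\sigma\colon K\otimes\vR\to\vR^n$ for the Minkowski embedding, the AM--GM inequality applied to $|N_{K/\vQ}(v)|=\prod_i|\sigma_i(v)|$ yields $|\sigma(v)|_{\vR^n}\geq c_0\,|N_{K/\vQ}(v)|^{1/n}$ for a constant $c_0$ depending only on $n$. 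Since both $\sigma$ and the coordinate embedding through $\vE$ are $\vR$-linear isomorphisms $K\otimes\vR\to\vR^n$, they differ by a fixed invertible linear map, so the two induced norms are comparable; this produces the desired $|v|_{\vE}\geq c_1\,N(I)^{1/n}$.

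Substituting $V=(2B)^{nm}$, $\det\Lambda=N(I)^m$, $W=2nm$, $L=2B$, and $\lambda_1\geq c_1\,N(I)^{1/n}$ into Theorem \ref{latticepoints} gives
\[\left| |I^m\cap S|-\frac{(2B)^{nm}}{N(I)^m}\right|\leq c'\cdot 2nm\cdot\left(\frac{2B}{c_1\,N(I)^{1/n}}+1\right)^{nm-1},\]
and absorbing the factor $2nm\,c'$ (which depends only on $n$ and $m$) into a single constant $c$ yields the claimed inequality. The main obstacle is the uniform lower bound on $\lambda_1$; once that is in place, the rest is a routine identification of the parameters of Theorem \ref{latticepoints}.
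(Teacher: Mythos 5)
Your proof is correct and follows essentially the same route as the paper: identify $\OO^m$ with $\vZ^{nm}$ via the basis $\vE$, apply Theorem \ref{latticepoints} to the lattice $I^m$ and the cube of side $2B$, compute $\det\Lambda=N(I)^m$, and obtain the uniform lower bound $\lambda_1(I^m)\geq c_1 N(I)^{1/n}$ by reducing to $m=1$, bounding $\lambda_1$ in the Minkowski embedding via the arithmetic--geometric mean inequality (the paper cites this as \cite[Lemma 5]{masval}), and transferring it to the $\vE$-coordinates through the fixed linear comparison map. The only cosmetic difference is that you re-derive the Minkowski bound directly rather than citing it; the half-open-cube remark is harmless since the topological boundary of $[-B,B)^{nm}$ coincides with that of the closed cube, so the theorem applies as stated.
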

\begin{proof}
Recall that there is a canonical embedding of $\OO$ into $\vR^n$: if $\sigma_1,\ldots,\sigma_r$ are the real embeddings $K\to \vR$ and $\sigma_{r+1},\ldots,\sigma_{r+2s}=\sigma_n$ are the complex ones labeled such that $\sigma_{r+i}=\overline{\sigma_{r+s+i}}$, then the map $\tau\colon\OO\to \vR^n$ defined by $x\mapsto (\sigma_1(x),\ldots,\sigma_r(x),\sigma_{r+1}(x),\ldots,\sigma_{r+s}(x))$ embeds $\OO$ as a full-rank lattice in $\vR^n$, where each $\sigma_{r+i}$ is thought as an embedding into $\vR^2$. The map $\tau$ induces an embedding $\tau^m\colon\OO^m\to \vR^{mn}$. The image of $\OO^m$ inside $\vR^{mn}$ via $\tau^m$ is again a full-rank lattice.
Let $\alpha_\vE:\OO\longrightarrow \vZ^n$ be the isomorphism of $\vZ$-modules
given by $\alpha_\vE\left(\sum^n_{i=1} x_i \mathbf{e}_i\right)=(x_1,\dots,x_n)$. Let $\alpha_{\vE}^m\colon \OO^m\to \vZ^{mn}$ be the isomorphism induced by $\alpha_{\vE}$.
Consider the following commutative diagram
\[
\begin{CD}
\OO^m @>\tau^m>> \vR^{mn}\\
@VV \alpha_\vE^m V @AA A A\\
\vZ^{mn} @>\iota>> \vR^{mn}
\end{CD}
\]
where $\iota$ is the inclusion map and $A$ is the unique $\vR$-linear map which makes the diagram commute.
The idea now is to apply Theorem \ref{latticepoints} with $\Lambda=(\iota\circ\alpha_\vE^m)(I^m)\subseteq \vR^{mn}$ and $S$ the cube of side $2B$ centered in the origin, so that $W=2mn$ and $L=2B$ in the notation of the theorem. Here by $I^m$ we mean the cartesian product of $m$ copies of $I$ inside $\OO^m$.\\
We first need a lower bound for the first successive minimum of $(\iota\circ\alpha_\vE^m)(I^m)$. To do this we can clearly assume $m=1$ since the first successive minimum of a lattice $\Lambda\subseteq \vR^n$ coincides with that of $\Lambda^m\subseteq  \vR^{mn}$.
Let $v$ be a vector realizing the first successive minimum of $(\iota\circ\alpha_\vE)(I)$ with respect to the euclidean norm $|\cdot |$. By Lemma 5 of \cite{masval}, the first successive minimum of $\tau(I)$ is greater or equal than $N(I)^{1/n}$.
Since $A(v)\in \tau(I)$ we have that
\[N(I)^{1/n}\leq |A (v)|\leq \|A\| | v| \]
where $\|A\|$ is defined by $\sup_{|w|=1}|A(w)|$. This shows that the first successive minimum of $(\iota\circ\alpha_{\vE})(I)$ is greater or equal than $c_1 N(I)^{1/n}$, where $c_1\coloneqq 1/\|A\|$ is independent of $B$ and of $I$.\\
Now the claim follows by applying Theorem \ref{latticepoints} together with the fact that
$$\det(\alpha_\vE^m(I^m))=\det(\alpha_{\vE}(I))^m=[\vZ^n:\alpha_\vE(I)]^m=[\OO\colon I]^m=N(I)^m$$
and observing that $|(\iota\circ\alpha_\vE^m)(I^m)\cap [-B,B[^{mn}|=|I^m\cap \OO[B]^m|=|(I\cap\OO[B])^m|$.
\end{proof}
\begin{proof}[Proof of Theorem \ref{MAIN}]
We already proved the Theorem in the case $m=1$ in Remark \ref{emme1}, therefore let us suppose $m>1$.
Let $t$ be a positive integer, $\vS_t$ the set consisting of the first $t$ prime numbers and define $E_t=E_{\vS_t}$. Observe that, since $E_t\supseteq E$ we have
\[\overline\vD_\vE(E)\leq \overline\vD(E_t)=\vD(E_t).\]
By letting $t$ run to infinity we get
\[\overline\vD_\vE(E)\leq \frac{1}{\zeta_K(m)}.\]
In order to show the opposite inequality observe that
\begin{equation}\label{equazionedensita}
\vD(E_t)-\overline\vD_\vE(E_t\setminus E)\leq\underline\vD_\vE(E).
\end{equation}
Therefore, it is enough to prove that
$\displaystyle{\lim_{t\rightarrow \infty}\overline\vD_\vE(E_t\setminus E)}=0$.
For a prime ideal $\p\subseteq \OO$, the $t$-th prime number $p_t$ and $M$ an integer, let us introduce the following notation.
\begin{itemize}
\item We say that $\p\succ M$ if and only if $\p$ lies over a prime greater than $M$ (Notice that, with this notation, one has that $\p\succ p_t$ implies $\p+(p_i)=\OO$ for every $i\leq t$).
\item We say that $M\succ \p$ if and only if the rational prime lying under $\p$ is less than $M$.
\end{itemize}
If $\mathcal P$ is the set of prime ideals of $\cO$, with this notation we have
\[E_t\setminus E\subseteq \bigcup_{\p\in \mathcal P: \; \p\succ p_t} \p^m\subseteq \OO^m\]
where $\p^m$ is the set $m$-tuples of elements of $\OO$ having all entries in $\p$.
It follows that
\[(E_t\setminus E)\cap \OO[B]^m\subseteq\bigcup_{\p\in \mathcal P: \; CB^n\succ\p\succ p_t} \left(\p\cap \OO[B]\right)^m\]
for $C$ a positive constant independent of $B$.
The upper bound $CB^n\succ\p$ comes from the following observation: for a fixed basis $\vE$, the norm function is a polynomial of degree $n$ in the coefficients (with respect to the basis $\vE$) of the elements of $\OO$. Therefore $N(\OO[B])\subseteq [-C B^n, C B^n]$ for a  constant $C$ depending only on the chosen basis. On the other hand, if an element of $\OO[B]$ is in $\p$ then its norm
is divisible by the rational prime $p$ lying under $\p$. This shows that there cannot exist primes $\p\succ CB^n$ containing a nonzero  element of $\OO[B]$.
We have then
\begin{align*}
\overline \vD_\vE(E_t\setminus E) & \leq \limsup_{B\rightarrow \infty}\left|\bigcup_{\p\in \mathcal P: \; CB^n\succ \p \succ p_t} \p^m\cap \OO[B]^m\right|\cdot{(2B)^{-nm}}\\
& \leq \limsup_{B\rightarrow \infty}\sum_{\p\in \mathcal P: \; CB^n\succ \p \succ p_t} |\p\cap \OO[B]|^m\cdot{(2B)^{-nm}}.
\end{align*}
By Proposition \ref{corollario}, $\displaystyle |\p\cap \OO[B]|^m=|(\p\cap \OO[B])^m|\leq \frac{(2B)^{mn}}{N(\p)^m}+c\left(\frac{2B}{c_1N(\p)^{1/n}}+1\right)^{mn-1}$. Therefore
\begin{align*}
\overline{\vD}_{\vE}(E_t\setminus E) & \leq \limsup_{B \rightarrow \infty} \sum_{CB^n \succ \p \succ p_t} |\p \cap \cO[B]|^m \cdot (2B)^{-nm}\\
&\leq \limsup_{B \to \infty} \sum_{CB^n \succ \p \succ p_t} \frac{1}{N(\p)^m} + c \left( \frac{2B}{c_1N(\p)^{1/n}}+1\right)^{mn-1} \cdot (2B)^{-nm}\\
&\leq \limsup_{B \to \infty} \sum_{CB^n >p> p_t} \frac{n}{p^m} + cn \left( \frac{2B}{c_1p^{1/n}}+1\right)^{mn-1}\cdot (2B)^{-nm}\\
&=: L_t,
\end{align*} 
where the last inequality holds because in each instance $N(\p) \geq p$ for $p$ the prime below $\p$, and above a fixed rational prime lie at most $n$ distinct primes of $\cO$.  Now our goal is to show that $L_t \to 0$ as $t\to\infty$.\\
Choose now a constant $c_1'\leq c_1$ (independent of $B$) for which $\displaystyle \frac{1}{C^{1/n}}\geq \frac{c_1'}{2}$. Notice that the sum that appears in $L_t$ is taken over primes $p$ such that $CB^n> p$, which shows that
\[\displaystyle B>\frac{1}{C^{1/n}}p^{1/n}\geq \frac{c_1'}{2}p^{1/n}.\]
It follows $\displaystyle \frac{2B}{c_1'p^{1/n}}\geq 1$ and then $\displaystyle \frac{2B}{c_1'p^{1/n}}+1\leq 2\frac{2B}{c_1'p^{1/n}}$. Therefore $L_t$ is bounded by
 \[\limsup_{B\to \infty}\sum_{p: \; CB^n>p>p_t} \frac{n}{p^m}+cn\left(\frac{4B}{c_1'p^{1/n}}\right)^{mn-1}\cdot{(2B)^{-nm}}=\]
\[ =\limsup_{B\to \infty}\sum_{p: \; CB^n>p>p_t} \frac{n}{p^m}+\frac{c'}{B\cdot p^{m-1/n}}\]
for some other constant $c'$ independent of $B$ and $p$. Now observe that
\[\limsup_{B\to\infty}\sum_{p: \; CB^n>p>p_t} \frac{n}{p^m}\leq \sum_{p>p_t}\frac{n}{p^m}\]
tends to zero when $t\to \infty$ because the series $\displaystyle \sum_p\frac{1}{p^m}$ is convergent, while for the other term one has that
\[\limsup_{B\to\infty}\sum_{CB^n>p>p_t}\frac{c'}{B\cdot p^{m-1/n}}\leq \limsup_{B\to\infty}\frac{c'}{B}\sum_{CB^n>p>p_t}\frac{1}{p}=0\]
since $\sum_{p<C B^n}\frac{1}{p}$ is asymptotic to $\log \log (CB^n)$. This concludes the proof by Equation (\ref{equazionedensita}).
\end{proof}

The following corollary produces the classical generalization of Mertens-Ces\`aro Theorem to the case of $m$-tuples of integers (presented in  \cite{Nymann}).
\begin{cor}[Extended Mertens-Ces\`aro Theorem]
The density of coprime $m$-tuples of integers is $\frac{1}{\zeta(m)}$, where $\zeta$ is the Riemann zeta function.
\end{cor}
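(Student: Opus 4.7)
The plan is to obtain the corollary as an immediate specialization of Theorem \ref{MAIN} to $K=\vQ$. In that case $\OO=\vZ$, the extension $K\supseteq\vQ$ has degree $n=1$, and the natural (essentially unique) $\vZ$-basis is $\vE=\{1\}$. Under these identifications the box $\OO[B,\vE]$ defined in Section \ref{sec:density} is exactly $[-B,B[\,\cap\,\vZ$, so $\OO[B,\vE]^m=([-B,B[\,\cap\,\vZ)^m$, and the density $\vD_\vE$ specializes for $T\subseteq \vZ^m$ to the classical natural density
\[
\lim_{B\to\infty} \frac{|T\cap[-B,B[^m|}{(2B)^m},
\]
which is precisely the notion under which Mertens, Ces\`aro, and (for general $m$) Nymann formulated their theorems. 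The independence of basis granted by Theorem \ref{MAIN} imposes no new content here, since up to sign there is only one basis of $\vZ$.

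It remains to verify that the right-hand side in Theorem \ref{MAIN} reduces to $1/\zeta(m)$. For $K=\vQ$, above each rational prime $p$ there is exactly one prime ideal, namely $(p)\subseteq\vZ$, with inertia degree $1$. Hence the Euler product for the Dedekind zeta function collapses to
\[
\zeta_\vQ(s)=\prod_p\left(1-\frac{1}{p^s}\right)^{-1}=\zeta(s),
\]
the Riemann zeta function. Applying Theorem \ref{MAIN} to the set $E\subseteq\vZ^m$ of coprime $m$-tuples of rational integers therefore yields $\vD(E)=1/\zeta_\vQ(m)=1/\zeta(m)$, which is exactly the statement of the corollary.

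There is no genuine obstacle in this argument: all the analytic and arithmetic content has already been absorbed into Theorem \ref{MAIN} (for $m>1$) and Remark \ref{emme1} (for $m=1$, where $\zeta(1)$ has a pole and both sides degenerate to $0$, matching the fact that $\{\pm 1\}$ has density $0$ in $\vZ$). The corollary just records what Theorem \ref{MAIN} says in the rational case.
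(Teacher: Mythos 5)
Your proof is correct and follows essentially the same route as the paper, which simply invokes Theorem \ref{MAIN} with $K=\vQ$; you have just spelled out the straightforward identifications ($\OO=\vZ$, $n=1$, $\OO[B,\vE]=[-B,B[\,\cap\,\vZ$, and $\zeta_\vQ=\zeta$) that the paper leaves implicit.
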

\begin{proof}
Follows directly from Theorem \ref{MAIN} by setting $K=\vQ$.
\end{proof}

\begin{rem}
Observe that the results of Theorem \ref{MAIN} are consistent with the expectations. The obtained density is in fact independent of the basis: by symmetry, indeed, all proofs can be done by using another basis $\vB$, obtaining the same result.
In addition, Theorem \ref{MAIN} extends Mertens-Ces\`aro Theorem for algebraic integers in the following sense: over $\vZ$ one can equivalently consider the density of the set of coprime $m$-tuples of integers or coprime $m$-tuples of ideals of $\vZ$ without any relevant distinction. If one is willing to do the same in the case of algebraic integers, one has to choose in which context one wants to consider the problem: in the context of $m$-tuples of ideals, the results in \citep{BS} are satisfying while in the setting of $m$-tuples of algebraic integers, Theorem \ref{MAIN} answers the question. Curiously,
even if the set up of the problem is very different, the resulting densities match.
Future work in this direction could possibly include an analysis of the density of $r$-prime $m$-tuples of algebraic integers, extending the definition given by Sittinger in \cite{BS}.
\end{rem}
\begin{rem}
In \cite{SCHA} the author gives an asymptotic for the number of points of bounded height $B$ in the $(m-1)$-dimensional projective space.
We will briefly explain why this result goes in a similar direction of the ones in the present note.
Let $E$ be the set of coprime $m$-tuples of $\OO_K^m$. There is an action of the group of units $\OO_K^*$ on $E$ given by  
$u(c_1,...,c_m)=(uc_1,...,uc_m)$ if $u\in \OO_K^*$ and $(c_1,...,c_m)\in E$.
The natural map from $E$ to the $(m-1)$-dimensional projective space $\mathbb P_K^{m-1}$ induces an injection $\iota$ from $E/\cO_K^*$ to $\mathbb P_K^{m-1}$. When $K$ has class number one, $\iota$ is also a surjection; now one could use \cite[Theorem 3]{SCHA} to see that the number of elements of $E/\OO_K^*$ of bounded height $B$ is asymptotic to 
$C_m(K)B^m/\zeta_K(m)$ where $C_m(K)$ is a constant depending on $m$ and the number field $K$.
Schanuel gets the constants because he is essentially ``counting'' more objects. For example, when $\OO_K=\vZ[\sqrt{-5}]$ the point $Q=[1+\sqrt{-5} , 2]\in \mathbb P_K^1$ would be "counted" in the case of Schanuel result even if $Q$ is not proportional to a coprime $m$-tuple. This happens because the class number of $K$ is different from $1$.
\end{rem}
\section*{Acknowledgements}
The authors would like to thank Fabrizio Barroero for his suggestion of using Theorem \ref{latticepoints}. We also want to thank Francesco Monopoli and Reto Schnyder  for useful comments.
Moreover, we are grateful to the anonymous referee for the suggestions, which helped us to improve the structure and the content of the paper.
\bibliographystyle{plainnat}
\bibliography{biblio}{}

\begin{thebibliography}{11}
\providecommand{\natexlab}[1]{#1}
\providecommand{\url}[1]{\texttt{#1}}
\expandafter\ifx\csname urlstyle\endcsname\relax
  \providecommand{\doi}[1]{doi: #1}\else
  \providecommand{\doi}{doi: \begingroup \urlstyle{rm}\Url}\fi

\bibitem[Barroero et~al.(2011)Barroero, Frei, and Tichy]{FabriU}
F.~Barroero, C.~Frei, and R.~Tichy.
\newblock Additive unit representations in rings over global fields - a survey.
\newblock \emph{Publ. Math. Debrecen}, 79\penalty0 (3):\penalty0 291--307,
  2011.

\bibitem[Cellarosi and Vinogradov(2013)]{FrancCell}
F.~Cellarosi and I.~Vinogradov.
\newblock Ergodic properties of $k$-free integers in number fields.
\newblock \emph{Journal of Modern Dynamics}, 7\penalty0 (3):\penalty0 461--488,
  2013.
\newblock ISSN 1930-5311.
\newblock \doi{10.3934/jmd.2013.7.461}.

\bibitem[Ces\`aro(1881)]{CESAROQ}
E.~Ces\`aro.
\newblock Question proposée 75.
\newblock \emph{Mathesis 1}, 184, 1881.

\bibitem[Ces\`aro(1883)]{CESARO}
E.~Ces\`aro.
\newblock Question 75 (solution).
\newblock \emph{Mathesis}, 3, 1883.

\bibitem[Hardy and Wright(1960)]{HW}
G.~H. Hardy and E.M. Wright.
\newblock \emph{An introduction to the theory of numbers}.
\newblock Clarendon Press Oxford, 1960.

\bibitem[Masser and Vaaler(2007)]{masval}
D.~Masser and J.D. Vaaler.
\newblock Counting algebraic numbers with large height ii.
\newblock \emph{Trans. of American Math. Soc.}, 359\penalty0 (1):\penalty0 427
  -- 445, 2007.

\bibitem[Maze et~al.(2011)Maze, Rosenthal, and Wagner]{Maze20}
G.~Maze, J.~Rosenthal, and U.~Wagner.
\newblock Natural density of rectangular unimodular integer matrices.
\newblock \emph{Linear Algebra and its Applications}, 434\penalty0
  (5):\penalty0 1319 -- 1324, 2011.
\newblock \doi{http://dx.doi.org/10.1016/j.laa.2010.11.015}.

\bibitem[Mertens(1874)]{mertens}
F.~Mertens.
\newblock Ueber einige asymptotische {G}esetze der {Z}ahlentheorie.
\newblock \emph{J. Reine Angew. Math.}, 77:\penalty0 289--338, 1874.
\newblock ISSN 0075-4102.
\newblock \doi{10.1515/crll.1874.77.289}.
\newblock URL
  \url{http://dx.doi.org.ezproxy.library.wisc.edu/10.1515/crll.1874.77.289}.

\bibitem[Nymann(1972)]{Nymann}
J.E Nymann.
\newblock On the probability that k positive integers are relatively prime.
\newblock \emph{Journal of Number Theory}, 4\penalty0 (5):\penalty0 469 -- 473,
  1972.
\newblock \doi{http://dx.doi.org/10.1016/0022-314X(72)90038-8}.

\bibitem[Schanuel(1979)]{SCHA}
S.~Schanuel.
\newblock Heights in number fields.
\newblock \emph{Bull. Soc. Math. France}, 107\penalty0 (4):\penalty0 433--449,
  1979.
\newblock ISSN 0037-9484.
\newblock URL \url{http://www.numdam.org/item?id=BSMF_1979__107__433_0}.

\bibitem[Sittinger(2010)]{BS}
B.~D. Sittinger.
\newblock The probability that random algebraic integers are relatively
  r-prime.
\newblock \emph{Journal of Number Theory}, 130\penalty0 (1):\penalty0 164 --
  171, 2010.
\newblock ISSN 0022-314X.
\newblock \doi{http://dx.doi.org/10.1016/j.jnt.2009.06.008}.

\end{thebibliography}

\end{document}